\theoremstyle{plain}
\newtheorem{theorem}{Theorem}[section]
\newtheorem{lemma}{Lemma}[section]
\theoremstyle{definition}
\newtheorem{definition}[theorem]{Definition}
\newtheorem{remark}{Remark}[section]
\newtheorem{proposition}[theorem]{proposition}
\numberwithin{equation}{section}
\newtheorem{corollary}{Corollary}[section]
\renewcommand{\theequation}{\the{section}.\arabic{equation}}
\renewcommand{\theequation}{\arabic{section}.\arabic{equation}}
\newcommand{\be}{\begin{equation}}
\newcommand{\ds}{\displaystyle}
\newcommand{\ee}{\end{equation}}
\newcommand{\bea}{\begin{eqnarray}}
\newcommand{\eea}{\end{eqnarray}}
\newcommand{\bl}{\begin{lemma}}
\newcommand{\el}{\end{lemma}}
\newcommand{\bco}{\begin{corollary}}
\newcommand{\eco}{\end{corollary}}
\newcommand{\bc}{\begin{center}}
\newcommand{\ec}{\end{center}}
\newcommand{\bd}{\begin{definition}}
\newcommand{\ed}{\end{definition}}
\newcommand{\ben}{\begin{enumerate}}
\newcommand{\een}{\end{enumerate}}
\newcommand{\bfi}{\begin{figure}}
\newcommand{\efi}{\end{figure}}
\newcommand{\brm}{\begin{remark}}
\newcommand{\erm}{\end{remark}}
\newcommand{\prp}{\begin{proposition}}
\newcommand{\propo}{\end{proposition}}
\newcommand{\R}{\mathbb{R}}
\newcommand{\N}{\mathbb{N}}
\newcommand{\bq}{\begin{quote}}
\newcommand{\eq}{\end{quote}}
\newcommand{\bqu}{\begin{quotation}}
\newcommand{\equ}{\end{quotation}}
\newenvironment{emphit}{\begin{itemize}}{\end{itemize}}
\newcommand{\bemp}{\begin{emphit}}
\newcommand{\eemp}{\end{emphit}}
\begin{document}
\title[Nabla Fractional Operators]
      {Existence and Uniqueness of Solutions of Nabla Fractional Difference Equations Tending to a Nonnegative Constant}

\author[Mert]{Raziye Mert$^1$}
\address{$^1$ Mechatronic Engineering Department, University of Turkish Aeronautical Association, 06790, Ankara, Turkey}
\email{$^1$rmert@thk.edu.tr}

\author[Peterson]{Allan Peterson$^2$}
\address{$^2$ Department of Mathematics, University of Nebraska-Lincoln, Lincoln, NE, 68588-0130,U.S.A.}
\email{$^2$apeterson1@unl.edu}

\author[Abdeljawad]{Thabet Abdeljawad $^3$}
\address{$^3$  Department of Mathematics and General Sciences, Prince Sultan University, P. O. Box 66833, 11586 Riyadh, Saudi Arabia}
\email{$^3$tabdeljawad@psu.edu.sa}

\author[Erbe]{Lynn Erbe$^4$}
\address{$^4$ Department of Mathematics, University of Nebraska-Lincoln, Lincoln, NE, 68588-0130,U.S.A.}
\email{$^4$lerbe@unl.edu}

\date{}

\begin{abstract}
In this paper, we reformulate certain nabla fractional difference equations which had been investigated by other researchers. The previous results seem to be incomplete. By using Contraction Mapping Theorem, we establish conditions under which solutions  exist and are unique and have certain asymptotic properties.

\vspace{.5cm}

{\noindent {\bf Keywords:} Nabla Fractional Difference Equation, Contraction Mapping Theorem, Existence and Uniqueness, Asymptotic Property}

\end{abstract}

\maketitle

\section{Introduction}

As a discrete counterpart of classical fractional calculus \cite{MR2, podlubny, Samko, Kilbas}, in recent years, discrete fractional calculus (DFC) has been the  focus of large  number of mathematicians (\cite{Miller,taa,Gray,Nabla,ThCaputo,TDbyparts,Ferd,Feri,AP4,G1,Nuno,Gnabla,Hein,88,89,83,RTE}). This discretizing  issue makes it possible for numerical analysts to develop
 discrete iterated algorithms that enable them  to obtain more accurate solutions  for discrete fractional initial and boundary value problems. Besides the previously mentioned references in the field of DFC, we refer the reader to the useful well-organized book \cite{CP} and for the right case of nabla and delta type discrete fractional sums and differences and the different integration by parts formulas we refer to \cite{dualC,dualR}. Very recently, the discrete versions of new types of fractional operators with nonsingular kernels and some of  their properties have been studied  (see \cite{TDADEMont,TDROMP,MonotonicityChoas,TQ,TQH,TDADIE}) which added a new trend to DFC.

Throughout this paper, for a real number $a$, we denote $\mathbb{N}_a:=\{a,a+1,\cdots \}.$

\begin{definition}(\cite{CP}) The generalized rising function is defined by
$$t^{\overline{r}}=\frac{\Gamma(t+r)}{\Gamma(t)},$$
for values of $t$ and $r$ so that $t, t+r \notin \{0,-1,-2,\cdots \}.$  We use the convention that if   $t$
is a nonpositive integer, but $t+r$ is not a nonpositive integer, then $t^{\overline{r}}:=0$.
\end{definition}

\begin{definition}(\cite{CP})\label{LT}
Let $f:\N_{a+1}\rightarrow\R$ and $\nu> 0,$ then the $\nu$-th order fractional sum based at $a$ is given by
$$\nabla_{a}^{-\nu}f(t)=\int_{a}^t\frac{(t-\rho(s))^{\overline{\nu-1}}}{\Gamma(\nu)} f(s)\,\nabla s=\frac{1}{\Gamma(\nu)}\sum\limits_{s=a+1}^t(t-\rho(s))^{\overline{\nu-1}} f(s),\quad t\in\N_{a}.$$
\end{definition}

\begin{definition}(\cite{CP})
Let $f:\N_{a+1}\rightarrow\R$, $\nu>0$ and choose $N$ such that
$N-1<\nu\leq N$. Then the $\nu$-th order nabla
fractional difference is defined by
$$\nabla_a ^\nu f(t)= \nabla^N\nabla_a^{-(N-\nu)} f (t),\quad t\in\mathbb{N}_{a+N}.$$
\end{definition}

\begin{lemma}(\cite{CP})\label{power} Let $\nu> 0$ and $\mu>-1$. Then for $t \in {\N}_{a}$, we have

$$\nabla_{a}^{-\nu}(t-a)^{\overline{\mu}} = \ds \frac {\Gamma(\mu+1)}{\Gamma(\mu+\nu+1)} (t-a)^{\overline{\mu+\nu}}.$$
\end{lemma}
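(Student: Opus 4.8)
The plan is to unfold Definition~\ref{LT} and reduce the power rule to a single binomial (Chu--Vandermonde type) convolution identity. Writing the backward jump operator as $\rho(s)=s-1$, the fractional sum of $f(s)=(s-a)^{\overline{\mu}}$ becomes
\[
\nabla_a^{-\nu}(t-a)^{\overline{\mu}}=\frac{1}{\Gamma(\nu)}\sum_{s=a+1}^{t}(t-s+1)^{\overline{\nu-1}}(s-a)^{\overline{\mu}}.
\]
I would then rewrite each rising factorial as a quotient of Gamma functions via $x^{\overline{r}}=\Gamma(x+r)/\Gamma(x)$, and substitute $j=s-a$ together with $n=t-a$ (a nonnegative integer since $t\in\N_a$). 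This turns the right-hand side into
\[
\frac{1}{\Gamma(\nu)}\sum_{j=1}^{n}\frac{\Gamma(n-j+\nu)}{\Gamma(n-j+1)}\cdot\frac{\Gamma(j+\mu)}{\Gamma(j)},
\]
a pure convolution in which $t$, $a$, and $\rho$ no longer appear; everything now depends only on $n$, $\mu$, $\nu$.

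The heart of the argument is the convolution identity
\[
\sum_{k=0}^{m}\frac{\Gamma(k+\alpha)}{\Gamma(\alpha)\,\Gamma(k+1)}\cdot\frac{\Gamma(m-k+\beta)}{\Gamma(\beta)\,\Gamma(m-k+1)}=\frac{\Gamma(m+\alpha+\beta)}{\Gamma(\alpha+\beta)\,\Gamma(m+1)},
\]
which I would prove by generating functions: each factor is the coefficient of $x^{k}$ (resp.\ $x^{m-k}$) in the binomial series $(1-x)^{-\alpha}$ (resp.\ $(1-x)^{-\beta}$), and the identity is just the statement $(1-x)^{-\alpha}(1-x)^{-\beta}=(1-x)^{-(\alpha+\beta)}$ read off coefficient-wise. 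To apply it, I set $k=j-1$, $m=n-1$, $\alpha=\mu+1$, $\beta=\nu$, which matches the two Gamma quotients above after pulling out the constants $\Gamma(\mu+1)$ and $\Gamma(\nu)$.

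Substituting the identity and simplifying, the overall factor $\Gamma(\nu)$ cancels and one is left with
\[
\frac{\Gamma(\mu+1)}{\Gamma(\mu+\nu+1)}\cdot\frac{\Gamma(n+\mu+\nu)}{\Gamma(n)}=\frac{\Gamma(\mu+1)}{\Gamma(\mu+\nu+1)}\,(t-a)^{\overline{\mu+\nu}},
\]
which is the claimed formula. The degenerate case $t=a$ (i.e.\ $n=0$) should be checked separately: the sum is empty, hence zero, and the right-hand side also vanishes because $0^{\overline{\mu+\nu}}=0$ by the convention on the generalized rising function whenever $\mu+\nu$ is not a nonpositive integer.

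The main obstacle is the convolution identity and, more precisely, bookkeeping the index shifts so that the two Gamma quotients line up with the standard $(1-x)^{-\alpha}$ coefficients: the off-by-one between the summation range $s=a+1,\dots,t$ and the exponent $m=n-1$ is exactly where errors tend to creep in. A secondary point worth stating carefully is the hypothesis $\mu>-1$ together with $\nu>0$, which guarantees $\mu+\nu>-1$ and keeps the Gamma arguments away from the nonpositive integers where the rising factorials would require the special convention; this is what makes the coefficient comparison legitimate.
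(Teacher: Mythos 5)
Your argument is correct. Note that the paper itself offers no proof of this lemma --- it is quoted verbatim from the book \cite{CP} --- so there is nothing internal to compare against; what you have supplied is an independent, self-contained verification. Your reduction is sound: unfolding Definition \ref{LT} with $\rho(s)=s-1$, substituting $j=s-a$, $n=t-a$, and invoking the Chu--Vandermonde convolution
\[
\sum_{k=0}^{m}\frac{\Gamma(k+\alpha)}{\Gamma(\alpha)\Gamma(k+1)}\cdot\frac{\Gamma(m-k+\beta)}{\Gamma(\beta)\Gamma(m-k+1)}=\frac{\Gamma(m+\alpha+\beta)}{\Gamma(\alpha+\beta)\Gamma(m+1)}
\]
with $k=j-1$, $m=n-1$, $\alpha=\mu+1$, $\beta=\nu$ lines up exactly ($n-j=m-k$, and the $j=1$, $j=n$ endpoint terms reduce to $\Gamma(\mu+1)$ and $\Gamma(\nu)$ as they should), after which the $\Gamma(\nu)$ cancels and $\Gamma(n+\mu+\nu)/\Gamma(n)=(t-a)^{\overline{\mu+\nu}}$ gives the claim. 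Since $\mu+1>0$ and $\nu>0$, all Gamma arguments in the sum are positive, so the generating-function identification of coefficients in $(1-x)^{-\alpha}(1-x)^{-\beta}=(1-x)^{-(\alpha+\beta)}$ is legitimate. The textbook source proves this power rule by a different route (essentially induction together with composition rules for fractional sums, building on the Taylor monomials), whereas your proof isolates the single combinatorial identity doing the work; that makes the mechanism more transparent, at the cost of having to track the off-by-one reindexing, which you do correctly. Your handling of $t=a$ is also right, though one could add that when $\mu+\nu=0$ the right-hand side at $t=a$ falls outside the stated convention entirely, a corner case the lemma's formulation implicitly sets aside.
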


\begin{theorem}(\cite{taa}) \label{lbsdandds}
For $\nu >0$ and $f$ defined in a suitable domain $\mathbb{N}_a$, we have
\begin{equation} \label{lbdse}
\nabla_a^{\nu}  \nabla_a^{-\nu}f(t)=f(t),
\end{equation}

\begin{equation} \label{lbsde1}
\nabla_a^{-\nu} \nabla_a^{\nu} f(t)=f(t), \quad when  \quad
\nu\notin\mathbb{N},
 \end{equation}
 and
 \begin{equation} \label{lbsde2}
 \nabla_a^{-\nu}  \nabla_a^{\nu}f(t)=
f(t)-\sum_{k=0}^{n-1}\frac{(t-a)^{\overline{k}}}{k!} \nabla^kf(a),
\quad when  \quad \nu=n \in \mathbb{N}.
\end{equation}
\end{theorem}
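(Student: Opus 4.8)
The plan is to reduce all three identities to two structural facts about the nabla fractional sum: the semigroup (composition) law $\nabla_a^{-\mu}\nabla_a^{-\nu}f=\nabla_a^{-(\mu+\nu)}f$ for $\mu,\nu>0$, and the commutation pair
$$\nabla\nabla_a^{-\nu}\phi(t)=\nabla_a^{-(\nu-1)}\phi(t),\qquad \nabla_a^{-\nu}\nabla\phi(t)=\nabla\nabla_a^{-\nu}\phi(t)-\frac{(t-a)^{\overline{\nu-1}}}{\Gamma(\nu)}\phi(a).$$
The semigroup law I would obtain by writing out the defining double sum, interchanging the order of summation, and collapsing the inner sum with a Vandermonde-type summation identity; equivalently it follows from Lemma \ref{power} on the rising-factorial basis $(t-a)^{\overline{\mu}}$, on which both sides agree, since for fixed $t$ each fractional sum is a finite linear functional. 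For the commutation pair I would compute directly: shifting the summation index in $\nabla_a^{-\nu}\phi(t-1)$ and subtracting, the key algebraic cancellation is $(t-s+1)^{\overline{\nu-1}}-(t-s)^{\overline{\nu-1}}=(\nu-1)(t-\rho(s))^{\overline{\nu-2}}$, which telescopes the resulting expression down to $\nabla_a^{-(\nu-1)}\phi$, while the only surviving endpoint contribution (the $s=a$ term in $\nabla_a^{-\nu}\nabla\phi$) supplies the boundary term.

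Granting these, identity (\ref{lbdse}) is immediate: $\nabla_a^{\nu}\nabla_a^{-\nu}f=\nabla^{N}\nabla_a^{-(N-\nu)}\nabla_a^{-\nu}f=\nabla^{N}\nabla_a^{-N}f=f$, where the middle step is the semigroup law and the last is the iterated fundamental theorem $\nabla^{N}\nabla_a^{-N}=\mathrm{id}$, itself $N$ applications of $\nabla\nabla_a^{-\beta}=\nabla_a^{-(\beta-1)}$ down to $\nabla_a^{-0}=\mathrm{id}$. No boundary term ever appears here, which is precisely why (\ref{lbdse}) requires no polynomial correction.

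For (\ref{lbsde1}) and (\ref{lbsde2}) I would set $g=\nabla_a^{-(N-\nu)}f$, so that $\nabla_a^{-\nu}\nabla_a^{\nu}f=\nabla_a^{-\nu}\nabla^{N}g$, and then peel off the $N$ differences by iterating the commutation identity. Induction on $N$ yields
$$\nabla_a^{-\nu}\nabla^{N}g(t)=\nabla^{N}\nabla_a^{-\nu}g(t)-\sum_{k=0}^{N-1}\frac{(t-a)^{\overline{\nu-N+k}}}{\Gamma(\nu-N+k+1)}\,\nabla^{k}g(a).$$
The main term simplifies by the semigroup law to $\nabla^{N}\nabla_a^{-\nu}\nabla_a^{-(N-\nu)}f=\nabla^{N}\nabla_a^{-N}f=f$, so everything now hinges on the boundary sum.

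The crux, and the step I expect to be the main obstacle, is the evaluation of $\nabla^{k}g(a)$, which is exactly where the hypothesis on $\nu$ enters. When $\nu\notin\N$ we have $N-\nu\in(0,1)$, so $g$ is a genuine fractional sum whose defining sum is empty, hence zero, at every argument $\le a$; therefore $\nabla^{k}g(a)=0$ for all $k=0,\dots,N-1$, the boundary sum collapses, and we obtain (\ref{lbsde1}). When $\nu=n\in\N$ we instead have $N=n$ and $N-\nu=0$, so $\nabla_a^{-0}$ is the identity and $g=f$; the coefficients become $\frac{(t-a)^{\overline{k}}}{\Gamma(k+1)}=\frac{(t-a)^{\overline{k}}}{k!}$ and $\nabla^{k}g(a)=\nabla^{k}f(a)$, reproducing exactly the Taylor-type sum in (\ref{lbsde2}). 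The delicate points to control are the bookkeeping of the boundary coefficients through the induction and the justification that the defining sum legitimately returns $0$ at and below the base point $a$, so that the entire dichotomy between the two cases rests on whether $N-\nu$ is strictly positive or zero.
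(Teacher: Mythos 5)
This theorem is quoted in the paper from the reference \cite{taa} and no proof is given there, so there is no in-paper argument to compare against; judged on its own, your proposal is correct and follows essentially the route of the cited source: the composition (exponent) law for fractional sums, the identity $\nabla\nabla_a^{-\nu}\phi=\nabla_a^{-(\nu-1)}\phi$, and the Leibniz-type commutation $\nabla_a^{-\nu}\nabla\phi(t)=\nabla\nabla_a^{-\nu}\phi(t)-\frac{(t-a)^{\overline{\nu-1}}}{\Gamma(\nu)}\phi(a)$, iterated $N$ times. The one point you rightly flag --- that $\nabla^{k}g(a)=0$ for $g=\nabla_a^{-(N-\nu)}f$ when $N-\nu\in(0,1)$ --- is covered by the paper's own convention $\sum_{s=a+1}^{a}f(s)=0$ (extended to empty sums below $a$), and in the only case used in this paper ($0<\nu<1$, so $N=1$) it reduces to the unambiguous $g(a)=0$.
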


From classical fractional calculus \cite{Kilbas}, we recall that  $D^{-\alpha} D^{\alpha} f(t) = f(t)$, where $D^{-\alpha}$ is the Riemann-Liuoville fractional integral operator, is valid  for sufficiently well-behaved functions such as continuous functions. Since discrete functions are continuous, we see that the term $\nabla_a^{-(1-\alpha)}f(t)|_{t=a}$, for $0< \alpha <1$ disappears in (\ref{lbsde1}), with the application of the convention that $\sum_{s=a+1}^{a}f(s)=0$. This supports the fact that  Riemann initial type problems usually make sense for functions not necessarily continuous  at $a$ (left case) so that the initial conditions are given in the form $x(a^+)=\lim_{t\rightarrow a^+}x(t)=x_0$. Since sequences are continuous functions the identity (\ref{lbsde1}),  which is the tool in solving initial value problems, appears without any initial condition. In \cite{taa}, to create an initial condition, the authors shifted the fractional difference operator so that it started at $a-1$.   Namely, we state the following theorem.

\begin{theorem} (\cite{taa})\label{IVP}
Consider the initial value problem
\begin{eqnarray}
\label{e3} \nabla_{a-1}^{\nu}y(t) &=& f(t, y(t)),\quad t\in\mathbb{N}_{a+1},\\
\label{e4} \nabla_{a-1}^{-(1-\nu)}y(t)|_{t=a} &=& y(a) = c,
\end{eqnarray}
where $0 <\nu < 1$ and $a$ is any real number. Then $y$ is a  solution of the  initial value problem (\ref{e3})-(\ref{e4}) if and only if $y$ has the representation
\begin{equation*}\label{int1}
 y(t) = \ds \frac{(t-a+1)^{\overline{\nu-1}}}{\Gamma(\nu)} y(a) +  \nabla_{a}^{-\nu} f(t, y(t)),\quad t\in\mathbb{N}_{a}.
\end{equation*}
\end{theorem}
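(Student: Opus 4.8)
The plan is to prove both implications by reducing the fractional equation to a first-order summation problem, inverting the fractional sum, and then carefully reconciling the fact that the operators in (\ref{e3})--(\ref{e4}) are based at $a-1$ while the claimed representation is written through the sum $\nabla_a^{-\nu}$ based at $a$. First I would record the technical device that drives everything: if a function $h$ satisfies $h(a)=0$, then $\nabla_{a-1}^{-\mu}h(t)=\nabla_a^{-\mu}h(t)$, because the only extra term in the base-$(a-1)$ sum, namely the $s=a$ term, carries the factor $h(a)=0$. This remark is what lets me pass between the shifted operator and the base-$a$ operator, and I expect it to be the \emph{main obstacle}, since every place where the two bases must be matched rests on verifying that the relevant function vanishes at $a$.

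For the forward direction, assume $y$ solves (\ref{e3})--(\ref{e4}). Since $0<\nu<1$ forces $N=1$ in the definition of the fractional difference, I would write $\nabla_{a-1}^{\nu}y(t)=\nabla\,\nabla_{a-1}^{-(1-\nu)}y(t)$ and set $G(t):=\nabla_{a-1}^{-(1-\nu)}y(t)$. Then (\ref{e3}) reads $\nabla G(t)=f(t,y(t))$ on $\N_{a+1}$ and (\ref{e4}) reads $G(a)=c$. Telescoping from $a+1$ to $t$ gives $G(t)=c+\sum_{s=a+1}^t f(s,y(s))=c+\nabla_a^{-1}f(t,y(t))$ for $t\in\N_a$, the sum being empty at $t=a$. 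Applying $\nabla_{a-1}^{1-\nu}$ and invoking (\ref{lbdse}) (base $a-1$, order $1-\nu$) recovers $y(t)=\nabla_{a-1}^{1-\nu}G(t)=\nabla_{a-1}^{1-\nu}\big[c+\nabla_a^{-1}f\big]$.

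It remains to evaluate the two pieces. For the constant, Lemma \ref{power} gives $\nabla_{a-1}^{-\nu}c=\frac{c}{\Gamma(\nu+1)}(t-a+1)^{\overline{\nu}}$, and one backward difference (using $\nabla(t-a+1)^{\overline{\nu}}=\nu(t-a+1)^{\overline{\nu-1}}$) yields $\nabla_{a-1}^{1-\nu}c=\frac{(t-a+1)^{\overline{\nu-1}}}{\Gamma(\nu)}c$, the first term of the representation. For the second piece I apply the base-shift remark: since $h:=\nabla_a^{-1}f$ satisfies $h(a)=0$, we get $\nabla_{a-1}^{1-\nu}\nabla_a^{-1}f=\nabla_a^{1-\nu}\nabla_a^{-1}f$, and the semigroup property of fractional sums together with (\ref{lbdse}) collapses this to $\nabla_a^{-\nu}f(t,y(t))$. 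Adding the two pieces and noting $y(a)=c$ completes this direction.

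For the converse, assume the representation and apply $\nabla_{a-1}^{\nu}$. The coefficient $(t-a+1)^{\overline{\nu-1}}=(t-(a-1))^{\overline{\nu-1}}$ is annihilated: Lemma \ref{power} gives $\nabla_{a-1}^{-(1-\nu)}(t-(a-1))^{\overline{\nu-1}}=\Gamma(\nu)$, a constant whose backward difference is $0$. For the remaining term, $F:=\nabla_a^{-\nu}f$ satisfies $F(a)=0$, so by the base-shift remark $\nabla_{a-1}^{-(1-\nu)}F=\nabla_a^{-(1-\nu)}F$, whence $\nabla_{a-1}^{\nu}\nabla_a^{-\nu}f=\nabla\,\nabla_a^{-1}f=f(t,y(t))$ on $\N_{a+1}$, establishing (\ref{e3}). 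Finally, evaluating $\nabla_{a-1}^{-(1-\nu)}y$ at $t=a$ gives $c$, since the first term contributes $\frac{c}{\Gamma(\nu)}\Gamma(\nu)=c$ and the $F$-term contributes an empty sum, which verifies (\ref{e4}).
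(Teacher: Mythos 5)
The paper does not actually prove Theorem \ref{IVP}: it is imported verbatim from \cite{taa} as a quoted result, so there is no in-paper proof to measure your argument against. Judged on its own, your proof is correct and is essentially the standard derivation. Both directions check out: setting $G=\nabla_{a-1}^{-(1-\nu)}y$ and telescoping $\nabla G=f$ from the initial value $G(a)=c$ is exactly the right reduction; the constant term computes via Lemma \ref{power} to $\frac{(t-a+1)^{\overline{\nu-1}}}{\Gamma(\nu)}c$ as you say; and your base-shift observation ($h(a)=0$ implies $\nabla_{a-1}^{-\mu}h=\nabla_a^{-\mu}h$, since the extra $s=a$ term carries the factor $h(a)$) is precisely the identity this paper itself uses, in the reverse direction, in the converse half of the proof of Lemma \ref{lemm103}. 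Two small points worth flagging. First, you silently invoke the composition rules $\nabla_a^{-\mu}\nabla_a^{-\eta}f=\nabla_a^{-(\mu+\eta)}f$ and $\nabla\,\nabla_a^{-\mu}f=\nabla_a^{-(\mu-1)}f$; these are true and standard (they are in \cite{CP}), but they are not among the facts quoted in this paper, so a proof written for this context should cite or verify them. Second, in the converse the evaluation $\nabla_{a-1}^{-(1-\nu)}y(t)|_{t=a}=y(a)$ is in fact automatic for any $y$ (the single surviving summand is $\frac{(1)^{\overline{-\nu}}}{\Gamma(1-\nu)}y(a)=y(a)$), so condition (\ref{e4}) reduces to $y(a)=c$; your term-by-term evaluation reaches the same conclusion but slightly obscures that the representation at $t=a$ is a tautology unless one reads the coefficient of the first term as the prescribed constant $c$ rather than as $y(a)$. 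Neither point is a genuine gap.
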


\begin{theorem} (Contraction Mapping Theorem) (\cite{KP}) Let $(X,||.||)$ be a Banach
space. Assume that $T : X\to X$ is a contraction mapping, that is, there is a real number $\alpha$,
$0\leq\alpha < 1$, such that $||Tx-Ty||\leq\alpha||x-y||$ for all $x,y\in X$. Then T has a unique
fixed point $z$ in $X.$
\end{theorem}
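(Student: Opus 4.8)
The plan is to prove existence by Picard iteration and uniqueness directly from the contraction inequality. First I would fix an arbitrary base point $x_0\in X$ and define the iterates $x_{n+1}=Tx_n$ for $n\geq 0$, so that $x_n=T^n x_0$. The immediate consequence of the contraction hypothesis is the one-step estimate $||x_{n+1}-x_n||=||Tx_n-Tx_{n-1}||\leq\alpha||x_n-x_{n-1}||$, which by induction yields $||x_{n+1}-x_n||\leq\alpha^n||x_1-x_0||$. This geometric decay of consecutive differences is the engine of the whole argument.

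Next I would show the sequence $(x_n)$ is Cauchy. For $m>n$ the triangle inequality gives
\begin{equation*}
||x_m-x_n||\leq\sum_{k=n}^{m-1}||x_{k+1}-x_k||\leq\Big(\sum_{k=n}^{m-1}\alpha^k\Big)||x_1-x_0||\leq\frac{\alpha^n}{1-\alpha}\,||x_1-x_0||.
\end{equation*}
Since $0\leq\alpha<1$, the right-hand side tends to $0$ as $n\to\infty$, so $(x_n)$ is Cauchy. This bounding of the tail by a convergent geometric series is the step I expect to be the main technical point, since it is where the strict inequality $\alpha<1$ is essential; dropping it would leave the partial sums unbounded and the argument would collapse. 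Because $(X,||.||)$ is a Banach space, completeness provides a limit $z\in X$ with $x_n\to z$.

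It then remains to verify that $z$ is a fixed point. Here I would exploit that a contraction is Lipschitz, hence continuous: from $||Tz-x_{n+1}||=||Tz-Tx_n||\leq\alpha||z-x_n||\to 0$ we get $Tx_n\to Tz$, while $x_{n+1}\to z$, so passing to the limit in $x_{n+1}=Tx_n$ gives $Tz=z$. Finally, for uniqueness, suppose $z$ and $w$ are both fixed points. Then $||z-w||=||Tz-Tw||\leq\alpha||z-w||$, so $(1-\alpha)||z-w||\leq 0$; since $1-\alpha>0$ this forces $||z-w||=0$, that is $z=w$. This completes the proof, and I would emphasize that completeness of $X$ and the strict bound $\alpha<1$ are the two indispensable hypotheses.
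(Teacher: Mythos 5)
Your proof is correct and complete: the Picard iteration, the geometric-series Cauchy estimate, continuity of $T$ to pass to the limit, and the uniqueness argument are all carried out properly, and you correctly identify completeness and $\alpha<1$ as the essential hypotheses. Note that the paper itself offers no proof of this statement --- it is quoted as a known tool with a citation to \cite{KP} --- and your argument is precisely the standard one found in such references, so there is nothing to reconcile.
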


\section{Main Results}

In \cite{abi} in Chapter 3, the author uses the Contraction Mapping Theorem to prove the existence and uniqueness of  solutions  of the fractional difference equations
\begin{equation*}\label{abi1}
\nabla_{a}^{\nu}(p\nabla y)(t)+q(t)y(\rho(t))=f(t),\quad t\in\N_{a+1},
\end{equation*}
and
\begin{equation*}\label{abi2}
\nabla_{a}^{\nu}(p\nabla y)(t)+F(t,y(t))=0,\quad t\in\N_{a+1},
\end{equation*}
where $0<\nu<1,$ $a$ is any real number, $p:\mathbb{N}_{a+1}\to (0,\infty)$, $q:\mathbb{N}_{a+1}\to[0,\infty),$ $f:\mathbb{N}_{a+1}\to\mathbb{R},$ and $F:\mathbb{N}_{a+1}\times[0,\infty)\to [0,\infty),$ tending to a nonnegative constant. But, in the proofs, the used solution representations appeared with a term  which turns out to be zero and hence no dependency on the initial condition was reported.  In order to allow the appearance of an initial condition,  we shall reformulate the above equations as
\begin{equation}\label{SA}
\nabla_{a-1}^{\nu}(p\nabla y)(t)+q(t)y(\rho(t))=f(t),\quad t\in\N_{a+1},
\end{equation}
and
\begin{equation}\label{selfad}
\nabla_{a-1}^{\nu}(p\nabla y)(t)+F(t,y(\rho(t)))=0,\quad t\in\N_{a+1},
\end{equation}
where $0<\nu<1,$ $a$ is any real number.


We now prove the main results  for Equation (\ref{selfad}).

\begin{lemma}\label{lemm103}
Let $p:\mathbb{N}_{a}\to (0,\infty)$ and $F:\mathbb{N}_{a+1}\times\R\to [0,\infty).$ For $M\geq 0,$ define
\begin{equation*}\label{set}
\zeta_{M}:=\{y:\N_{a-1}\to[M,\infty):\,\nabla y(t)\leq 0 \,\forall\, t\in\N_{a}\,\,{\text{and}}\,\,\nabla y(a)=0\}.
\end{equation*}
The forced fractional difference equation (\ref{selfad}) has  a solution $y\in\zeta_{M}$ such that $\displaystyle \lim_{t\to\infty}y(t)=M$ if and only if the summation equation
\begin{equation}\label{sumeqn}
y(t)=M+\sum_{s=t+1}^{\infty}\frac{1}{p(s)}\sum_{\tau=a+1}^{s}\frac{(s-\rho(\tau))^{\overline{\nu-1}}}{\Gamma(\nu)}F(\tau, y(\rho(\tau)))
\end{equation}
has a solution $y$ on $\N_{a-1}.$
\end{lemma}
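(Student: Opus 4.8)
The plan is to introduce the auxiliary function $u:=p\nabla y$ and to read equation (\ref{selfad}) as a Riemann--Liouville type nabla fractional initial value problem for $u$, so that Theorem \ref{IVP} applies verbatim. The whole argument then reduces to one application of that representation theorem followed by a telescoping summation, carried out in both directions.

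First I would note that for every $y\in\zeta_{M}$ the requirement $\nabla y(a)=0$ forces $u(a)=p(a)\nabla y(a)=0$, and that a one-line computation from Definition \ref{LT} gives $\nabla_{a-1}^{-(1-\nu)}u(t)|_{t=a}=u(a)$, so the initial data in the form demanded by Theorem \ref{IVP} is simply $u(a)=0$. Rewriting (\ref{selfad}) as $\nabla_{a-1}^{\nu}u(t)=-F(t,y(\rho(t)))$ for $t\in\N_{a+1}$ and applying Theorem \ref{IVP} with this forcing, the boundary term $\frac{(t-a+1)^{\overline{\nu-1}}}{\Gamma(\nu)}u(a)$ vanishes, and (\ref{selfad}) becomes equivalent to
\begin{equation*}
(p\nabla y)(t)=-\nabla_{a}^{-\nu}F(t,y(\rho(t)))=-\frac{1}{\Gamma(\nu)}\sum_{\tau=a+1}^{t}(t-\rho(\tau))^{\overline{\nu-1}}F(\tau,y(\rho(\tau))),\qquad t\in\N_{a}.
\end{equation*}
Dividing by $p(t)>0$ gives a closed formula for $\nabla y(t)$.

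For the forward direction I would combine this with $\lim_{t\to\infty}y(t)=M$ and the telescoping identity $\sum_{s=t+1}^{N}\nabla y(s)=y(N)-y(t)$; letting $N\to\infty$ yields $y(t)=M-\sum_{s=t+1}^{\infty}\nabla y(s)$, and inserting the formula for $\nabla y(s)$ produces exactly (\ref{sumeqn}). For the converse, starting from a solution $y$ of (\ref{sumeqn}), subtracting consecutive values collapses the two tails to the single $s=t$ summand and recovers the same identity for $(p\nabla y)(t)$ displayed above, whence (\ref{selfad}) follows by running Theorem \ref{IVP} in the opposite direction. It then remains to confirm that $y\in\zeta_{M}$ and that $\lim_{t\to\infty}y(t)=M$: since $0<\nu<1$ and $s-\rho(\tau)=s-\tau+1\geq1$, each factor $(s-\rho(\tau))^{\overline{\nu-1}}=\Gamma(s-\tau+\nu)/\Gamma(s-\tau+1)$ is positive, so with $F\geq0$ and $p>0$ every summand is nonnegative; this gives $y(t)\geq M$ and $\nabla y(t)\leq0$, the empty sum at $t=a$ gives $\nabla y(a)=0$, and convergence of the nonnegative series forces its tail, hence $y(t)-M$, to tend to $0$.

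The step I expect to be the main obstacle is the correct handling of the shifted operator, namely verifying that $u=p\nabla y$ really is a solution of a genuine $\nabla_{a-1}^{\nu}$ initial value problem with initial value $u(a)=0$, so that Theorem \ref{IVP} may be invoked in both directions and the term $\frac{(t-a+1)^{\overline{\nu-1}}}{\Gamma(\nu)}u(a)$ drops out. This is exactly where the condition $\nabla y(a)=0$ encoded in $\zeta_{M}$ enters, and it is what closes the gap left in the earlier formulation over $\nabla_{a}^{\nu}$. A secondary, purely analytic point is the convergence of the series in (\ref{sumeqn}) and the legitimacy of interchanging the limit with the (nonnegative, hence unconditionally summable) double sum.
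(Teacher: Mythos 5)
Your proposal is correct and follows essentially the same route as the paper: the substitution $u=p\nabla y$, the application of Theorem \ref{IVP} with the boundary term killed by $\nabla y(a)=0$, the telescoping summation, and the final verification that $y\in\zeta_M$ with $\lim_{t\to\infty}y(t)=M$ are exactly the paper's argument. The only difference is in the converse, where you invoke the ``if and only if'' of Theorem \ref{IVP}, while the paper instead verifies $\nabla_{a-1}^{\nu}(p\nabla y)(t)=-F(t,y(\rho(t)))$ directly by rewriting $\nabla_{a}^{-\nu}$ in terms of $\nabla_{a-1}^{-\nu}$ and using the composition identity together with the power rule of Lemma \ref{power}; both are valid.
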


\begin{proof}
Suppose the fractional difference equation (\ref{selfad}) has a solution $y\in\zeta_{M}$ that  satisfies $\displaystyle \lim_{t\to\infty}y(t)=M.$ Let
$x(t):=(p\nabla y)(t).$ Then $x$ solves the fractional initial value problem
\begin{eqnarray*}
\nabla_{a-1}^{\nu} x (t)&=&-F(t,y(\rho(t))),\quad t\in\N_{a+1},\\
x(a)&=&(p\nabla y)(a).
\end{eqnarray*}
\noindent By Theorem \ref{IVP}, $x$ has the representation
\begin{equation*}
x(t)=\frac{(t-a+1)^{\overline{\nu-1}}}{\Gamma(\nu)}x(a)-\nabla_{a}^{-\nu}F(t,y(\rho(t))),\quad t\in\N_{a}.
\end{equation*}
From $\nabla y(a)=0,$ it follows that
\begin{equation*}\label{rep}
\nabla y(t)=-\frac{1}{p(t)}\nabla_{a}^{-\nu}F(t,y(\rho(t))),\quad t\in\N_{a}.
\end{equation*}
Now summing from $s=t+1$ to $\infty$ and using the fact that $\displaystyle \lim_{t\to\infty}y(t)=M,$ we get
$$M-y(t)=-\sum_{s=t+1}^{\infty}\frac{1}{p(s)}\sum_{\tau=a+1}^{s}\frac{(s-\rho(\tau))^{\overline{\nu-1}}}{\Gamma(\nu)}F(\tau, y(\rho(\tau))),\quad t\in\N_{a-1}.$$
Hence,
$$y(t)=M+\sum_{s=t+1}^{\infty}\frac{1}{p(s)}\sum_{\tau=a+1}^{s}\frac{(s-\rho(\tau))^{\overline{\nu-1}}}{\Gamma(\nu)}F(\tau, y(\rho(\tau))),\quad t\in\N_{a-1}.$$
\noindent Thus $y$ is a solution of the summation equation (\ref{sumeqn}).

Conversely, if $y$ is a solution of the summation equation (\ref{sumeqn}) on $\N_{a-1},$ then
$$y(t)=M+\sum_{s=t+1}^{\infty}\frac{1}{p(s)}\sum_{\tau=a+1}^{s}\frac{(s-\rho(\tau))^{\overline{\nu-1}}}{\Gamma(\nu)}F(\tau, y(\rho(\tau))),\quad t\in\N_{a-1}.$$

\noindent Now by taking the nabla difference on both sides of the last equation, we get that
\begin{equation}\label{follows}
\nabla y(t)=-\frac{1}{p(t)}\sum_{\tau=a+1}^{t}\frac{(t-\rho(\tau))^{\overline{\nu-1}}}{\Gamma(\nu)}F(\tau, y(\rho(\tau))),\quad t\in\N_{a}.
\end{equation}
Hence,
\begin{equation*}
p(t)\nabla y(t)=-\nabla_{a}^{-\nu}F(t,y(\rho(t))),\quad t\in\N_{a}.
\end{equation*}
Taking the $\nu$-th difference based at $a-1$ of both sides, we get
\begin{eqnarray*}
\nabla_{a-1}^{\nu}(p\nabla y)(t)&=&-\nabla_{a-1}^{\nu}\nabla_{a}^{-\nu}F(t,y(\rho(t)))\\
&=&-\nabla_{a-1}^{\nu}\left\{\nabla_{a-1}^{-\nu}F(t,y(\rho(t)))-\frac{(t-a+1)^{\overline{\nu-1}}}{\Gamma(\nu)}F(a,y(\rho(a)))\right\}\\
&=&-\nabla_{a-1}^{\nu}\nabla_{a-1}^{-\nu}F(t,y(\rho(t)))+\nabla_{a-1}^{\nu}\left\{\frac{(t-(a-1))^{\overline{\nu-1}}}{\Gamma(\nu)}F(a,y(\rho(a)))\right\}\\
&=&-F(t,y(\rho(t))),\quad t\in\N_{a+1},
\end{eqnarray*}
which follows from the power rule in Lemma \ref{power}. This implies that
$$\nabla_{a-1}^{\nu}(p\nabla y)(t)+F(t,y(\rho(t)))=0,\quad t\in\N_{a+1}.$$
Hence, $y$ is a solution of the fractional difference equation (\ref{selfad}). We also observe that $y(t)\geq M$ for all $t\in\N_{a-1}$ since $p(t)> 0$ for all $t\in\N_{a}$ and $F(t,u)\geq 0$ for all $(t,u)\in\N_{a+1}\times\R.$ From the expression for  $\nabla y(t)$  given by equation (\ref{follows}), we see that $\nabla y(t)\leq 0$ for all $t\in\N_{a}$ and in particular
$$
\nabla y(a)=-\frac{1}{p(a)}\sum_{\tau=a+1}^{a}\frac{(a-\rho(\tau))^{\overline{\nu-1}}}{\Gamma(\nu)}F(\tau, y(\rho(\tau)))=0
$$ by convention. Thus $y\in\zeta_{M}.$ From the convergence of the series, it follows from equation (\ref{sumeqn}) that $\displaystyle \lim_{t\to\infty}y(t)=M$.
\end{proof}

\begin{remark}\label{complete} It is straightforward to prove that the pair $(\zeta_{M},||.||)$, where $\displaystyle ||y||:=\sup_{t\in\mathbb{N}_{a-1}}|y(t)|,$ is a complete metric space.
\end{remark}

\begin{theorem}\label{cont1}
Assume $F:\mathbb{N}_{a+1}\times[0,\infty)\to [0,\infty)$ satisfies a uniform Lipschitz
condition with respect to its second variable, i.e., there is a constant
$K > 0$ such that $$|F(t,u)-F(t,v)|\leq K |u-v|$$ for all $t\in\mathbb{N}_{a+1}, u,v\in[0,\infty)$ and assume $p:\mathbb{N}_{a}\to (0,\infty)$ and let $(\zeta_{M},||.||)$ be the complete metric space as defined in Remark \ref{complete}. If\\
\noindent (H1)\, the series  $\displaystyle \sum_{s=a+1}^{\infty}\frac{1}{p(s)}\sum_{\tau=a+1}^{s}\frac{(s-\rho(\tau))^{\overline{\nu-1}}}{\Gamma(\nu)}F(\tau, y(\rho(\tau)))$ converges for every $y\in\zeta_{M},$\\
\noindent and\\
\noindent (H2)\,$\displaystyle \beta:=\frac{K}{\Gamma(\nu+1)}\left(\sum_{s=a+1}^{\infty}\frac{(s-a)^{\overline{\nu}}}{p(s)}\right)<1$,\\
then there exists a unique positive solution of the  fractional difference equation (\ref{selfad}) with $\displaystyle \lim_{t\to\infty}y(t)=M$.
\end{theorem}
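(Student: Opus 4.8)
The plan is to recast the summation equation (\ref{sumeqn}) as a fixed-point problem and invoke the Contraction Mapping Theorem. Define the operator $T:\zeta_M\to\zeta_M$ by
\[
(Ty)(t)=M+\sum_{s=t+1}^{\infty}\frac{1}{p(s)}\sum_{\tau=a+1}^{s}\frac{(s-\rho(\tau))^{\overline{\nu-1}}}{\Gamma(\nu)}F(\tau,y(\rho(\tau))),\qquad t\in\N_{a-1}.
\]
A fixed point of $T$ is exactly a solution of (\ref{sumeqn}), which by Lemma \ref{lemm103} corresponds to a solution $y\in\zeta_M$ of (\ref{selfad}) with $\lim_{t\to\infty}y(t)=M$. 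Since $(\zeta_M,\|\cdot\|)$ is a complete metric space by Remark \ref{complete}, it suffices to verify that $T$ is well defined, maps $\zeta_M$ into itself, and is a contraction.

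First I would check that $T$ is well defined and that $T(\zeta_M)\subseteq\zeta_M$. Hypothesis (H1) guarantees that the double series converges for every $y\in\zeta_M$, so $(Ty)(t)$ is finite; since $F\ge 0$, $p>0$, and the rising factorials are nonnegative, every summand is nonnegative and hence $(Ty)(t)\ge M$. Taking the nabla difference of $Ty$ exactly as in the converse part of the proof of Lemma \ref{lemm103} yields
\[
\nabla(Ty)(t)=-\frac{1}{p(t)}\sum_{\tau=a+1}^{t}\frac{(t-\rho(\tau))^{\overline{\nu-1}}}{\Gamma(\nu)}F(\tau,y(\rho(\tau)))\le 0,\qquad t\in\N_a,
\]
and the empty-sum convention gives $\nabla(Ty)(a)=0$. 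Thus $Ty\in\zeta_M$.

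The heart of the argument is the contraction estimate. For $y,z\in\zeta_M$ and $t\in\N_{a-1}$, the uniform Lipschitz condition gives
\[
|(Ty)(t)-(Tz)(t)|\le K\|y-z\|\sum_{s=t+1}^{\infty}\frac{1}{p(s)}\sum_{\tau=a+1}^{s}\frac{(s-\rho(\tau))^{\overline{\nu-1}}}{\Gamma(\nu)}.
\]
The key simplification is that the inner sum is $\nabla_a^{-\nu}(1)$ evaluated at $s$; since $(s-a)^{\overline{0}}=1$, the power rule of Lemma \ref{power} with $\mu=0$ collapses it to $\frac{(s-a)^{\overline{\nu}}}{\Gamma(\nu+1)}$. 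Because the $s=a$ term vanishes (as $(a-a)^{\overline{\nu}}=0$) and all terms are nonnegative, the partial tail $\sum_{s=t+1}^{\infty}$ is dominated by the full series $\sum_{s=a+1}^{\infty}$, so that $|(Ty)(t)-(Tz)(t)|\le\beta\|y-z\|$ with $\beta$ as in (H2). Taking the supremum over $t$ gives $\|Ty-Tz\|\le\beta\|y-z\|$, and (H2) ensures $\beta<1$.

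By the Contraction Mapping Theorem, $T$ then has a unique fixed point $y\in\zeta_M$, which by Lemma \ref{lemm103} is the unique solution of (\ref{selfad}) lying in $\zeta_M$ and satisfying $\lim_{t\to\infty}y(t)=M$; positivity follows from $y\ge M\ge 0$. I expect the main obstacle to be the contraction step, specifically recognizing the inner summation as $\nabla_a^{-\nu}(1)$ and applying the power rule to reduce it to $\frac{(s-a)^{\overline{\nu}}}{\Gamma(\nu+1)}$, which is precisely what links the Lipschitz bound to the constant $\beta$ of hypothesis (H2). Verifying the self-mapping property (the monotonicity $\nabla(Ty)\le 0$ and the boundary condition $\nabla(Ty)(a)=0$) is routine once the nabla-difference computation from Lemma \ref{lemm103} is reused.
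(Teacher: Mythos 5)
Your proposal is correct and follows essentially the same route as the paper: the same operator $T$, the same verification that $T$ maps $\zeta_M$ into itself via nonnegativity and the nabla-difference computation, the same contraction estimate obtained by applying the Lipschitz condition and collapsing the inner sum to $\frac{(s-a)^{\overline{\nu}}}{\Gamma(\nu+1)}$ via the power rule, and the same appeal to the Contraction Mapping Theorem and Lemma \ref{lemm103}. Your explicit remark that the $s=a$ term of the tail vanishes (so the tail is dominated by the series in (H2)) is a small point the paper passes over silently, but otherwise the arguments coincide.
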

\begin{proof}
Let $(\zeta_{M},||.||)$ be the complete metric space  as defined in Remark \ref{complete}. Define the mapping $T$ on $\zeta_{M}$ by
\begin{equation*}
(Ty)(t):=M+\sum_{s=t+1}^{\infty}\frac{1}{p(s)}\sum_{\tau=a+1}^{s}\frac{(s-\rho(\tau))^{\overline{\nu-1}}}{\Gamma(\nu)}F(\tau, y(\rho(\tau))).
\end{equation*}
Now, we will  show that $T:\zeta_{M}\to\zeta_{M}$. First note that for all $y\in\zeta_{M}$,  $(Ty)(t)\geq M$ for all $t\in\N_{a-1}$ since $p(t)> 0$ for all $t\in\N_{a}$ and $F(t,u)\geq 0$ for all $(t,u)\in\N_{a+1}\times[0,\infty).$
 Next note that
\begin{equation*}
\nabla (Ty)(t)=-\frac{1}{p(t)}\sum_{\tau=a+1}^{t}\frac{(t-\rho(\tau))^{\overline{\nu-1}}}{\Gamma(\nu)}F(\tau, y(\rho(\tau)))\leq 0,\quad t\in\N_{a},
\end{equation*}
 and $\nabla (Ty)(a)=0$ by convention. Hence, $T$ maps $\zeta_{M}$ into itselt. Furthermore, we will show that $T$ is a contraction mapping. Let $x, y\in \zeta_{M}$ and $t\in\N_{a-1}$ be fixed but arbitrary. Then
\begin{eqnarray*}
|(Tx)(t)-(Ty)(t)| &=&\left|\sum_{s=t+1}^{\infty}\frac{1}{p(s)}\sum_{\tau=a+1}^{s}\frac{(s-\rho(\tau))^{\overline{\nu-1}}}{\Gamma(\nu)}(F(\tau, x(\rho(\tau)))-F(\tau, y(\rho(\tau))))\right|\\
&\leq& \sum_{s=t+1}^{\infty}\frac{1}{p(s)}\sum_{\tau=a+1}^{s}\frac{(s-\rho(\tau))^{\overline{\nu-1}}}{\Gamma(\nu)}|F(\tau, x(\rho(\tau)))-F(\tau, y(\rho(\tau)))|\\
&\leq& K \sum_{s=t+1}^{\infty}\frac{1}{p(s)}\sum_{\tau=a+1}^{s}\frac{(s-\rho(\tau))^{\overline{\nu-1}}}{\Gamma(\nu)}| x(\rho(\tau))- y(\rho(\tau))|\\
&\leq& K||x-y||\sum_{s=t+1}^{\infty}\frac{1}{p(s)}\sum_{\tau=a+1}^{s}\frac{(s-\rho(\tau))^{\overline{\nu-1}}}{\Gamma(\nu)}\\
&=&\frac{K}{\Gamma(\nu+1)}\left(\sum_{s=t+1}^{\infty}\frac{(s-a)^{\overline{\nu}}}{p(s)}\right)||x-y||\\
&\leq&\frac{K}{\Gamma(\nu+1)}\left(\sum_{s=a+1}^{\infty}\frac{(s-a)^{\overline{\nu}}}{p(s)}\right)||x-y||\\
&=&\beta ||x-y||.
 \end{eqnarray*}
So
$$||Tx-Ty||\leq\beta ||x-y||$$
with $\beta<1,$ and hence $T$ is a contraction mapping. By Contraction Mapping Theorem, $T$  has a unique fixed point $y\in\zeta_{M}$.
This fixed point satisfies the summation equation (\ref{sumeqn}), and therefore
by Lemma \ref{lemm103}, it is also a solution of the fractional difference equation (\ref{selfad}) that satisfies $\displaystyle\lim_{t\to\infty}y(t)=M.$
\end{proof}
\begin{remark}\label{comp}
Assume $p:\N_{a}\to(0,\infty)$ satisfies $\displaystyle\sum_{s=a}^{\infty}\frac{1}{p(s)}<\infty$ and define $d:\zeta_{M}\times\zeta_{M}\to[0,\infty)$ by
$$d(x,y):=\sup_{t\in\N_{a-1}}\frac{|x(t)-y(t)|}{w(t)},$$
where
$$w(t):=e^{-\left(\sum_{s=a}^{t}\frac{1}{p(s)}\right)}.$$
Note that $0<w(t)\leq 1$ for all $t\in\N_{a-1}$ and $0<\displaystyle L:=\lim_{t\to\infty}w(t)<1.$ Then the pair $(\zeta_{M},d)$ is a complete metric space.
\end{remark}
\begin{proof}
The proof follows as in the proof of Lemma 3.4.1 in \cite{abi}.
\end{proof}
\begin{theorem}
Assume $F:\mathbb{N}_{a+1}\times [0,\infty)\to [0,\infty)$ satisfies a uniform Lipschitz
condition with respect to its second variable, i.e., there is a constant
$K > 0$ such that $$|F(t,u)-F(t,v)|\leq K |u-v|$$ for all $t\in\mathbb{N}_{a+1}, u,v\in [0,\infty)$ and assume $p:\mathbb{N}_{a}\to (0,\infty)$ and let $(\zeta_{M},d)$ be the complete metric space as defined in Remark \ref{comp}. If\\
\noindent (H1)\, the series  $\displaystyle \sum_{s=a+1}^{\infty}\frac{1}{p(s)}\sum_{\tau=a+1}^{s}\frac{(s-\rho(\tau))^{\overline{\nu-1}}}{\Gamma(\nu)}F(\tau, y(\rho(\tau)))$ converges for every $y\in\zeta_{M},$\\
\noindent and\\
\noindent (H2)\,$\displaystyle \alpha:=\frac{K}{L\Gamma(\nu+1)}\sum_{s=a+1}^{\infty}\frac{(s-a)^{\overline{\nu}}}{p(s)}<1$,\\
then there exists a unique positive solution of the  fractional difference equation (\ref{selfad}) with $\displaystyle \lim_{t\to\infty}y(t)=M$.
\end{theorem}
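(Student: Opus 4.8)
The plan is to run the same fixed-point scheme as in the proof of Theorem \ref{cont1}, but to measure distances in the weighted metric $d$ of Remark \ref{comp} rather than in the supremum norm $\|\cdot\|$. I define the operator $T$ on $\zeta_M$ by the right-hand side of the summation equation (\ref{sumeqn}), exactly as before. The verification that $T$ maps $\zeta_M$ into itself is then unchanged: positivity of $p$ and nonnegativity of $F$ give $(Ty)(t)\geq M$; the explicit formula for $\nabla(Ty)(t)$ shows $\nabla(Ty)(t)\leq 0$ on $\N_a$; and the empty-sum convention gives $\nabla(Ty)(a)=0$. Hypothesis (H1) guarantees convergence of the defining series, so $T$ is well defined. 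Hence the only genuinely new work is to establish that $T$ is a contraction in the metric $d$.

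For the contraction estimate I fix $x,y\in\zeta_M$ and $t\in\N_{a-1}$ and proceed as in Theorem \ref{cont1} up to the application of the Lipschitz condition, arriving at
$$|(Tx)(t)-(Ty)(t)|\leq K\sum_{s=t+1}^{\infty}\frac{1}{p(s)}\sum_{\tau=a+1}^{s}\frac{(s-\rho(\tau))^{\overline{\nu-1}}}{\Gamma(\nu)}\,|x(\rho(\tau))-y(\rho(\tau))|.$$
The new ingredient is to insert the weight: writing $|x(\rho(\tau))-y(\rho(\tau))|\leq d(x,y)\,w(\rho(\tau))$ and then bounding $w(\rho(\tau))\leq 1$ (since $0<w\leq 1$), the double sum collapses by the power rule of Lemma \ref{power} exactly as before, yielding
$$|(Tx)(t)-(Ty)(t)|\leq\frac{K\,d(x,y)}{\Gamma(\nu+1)}\sum_{s=t+1}^{\infty}\frac{(s-a)^{\overline{\nu}}}{p(s)}\leq\frac{K\,d(x,y)}{\Gamma(\nu+1)}\sum_{s=a+1}^{\infty}\frac{(s-a)^{\overline{\nu}}}{p(s)}.$$

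The decisive step is to divide by $w(t)$ and use the lower bound $w(t)\geq L$, valid for every $t\in\N_{a-1}$ because $w$ is nonincreasing with limit $L>0$. This gives $\dfrac{|(Tx)(t)-(Ty)(t)|}{w(t)}\leq\alpha\,d(x,y)$ with $\alpha$ as in (H2), uniformly in $t$; taking the supremum over $t$ yields $d(Tx,Ty)\leq\alpha\,d(x,y)$ with $\alpha<1$. The Contraction Mapping Theorem then produces a unique fixed point $y\in\zeta_M$, and Lemma \ref{lemm103} identifies it as the unique solution of (\ref{selfad}) satisfying $\lim_{t\to\infty}y(t)=M$.

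The main subtlety I expect is the correct, asymmetric use of the two-sided bound $L\leq w(t)\leq 1$. The upper bound $w(\rho(\tau))\leq 1$ must be applied inside the double sum, where $\tau$ ranges down to $a+1$, so that no sharper uniform control of $w(\rho(\tau))$ is available; in particular one cannot replace $w(\rho(\tau))$ by $w(t)$, since $\rho(\tau)\leq t$ can make $w(\rho(\tau))$ as large as $w(a)$. The lower bound $w(t)\geq L$ is then used only in the outer normalization by $w(t)$. This is precisely why the factor $1/L$ is unavoidable and why (H2) appears here with $L$ in the denominator, in contrast to the constant $\beta$ of Theorem \ref{cont1}.
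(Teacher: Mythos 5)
Your proposal is correct and follows essentially the same route as the paper: the same operator $T$, the same self-mapping verification carried over from Theorem \ref{cont1}, the insertion of the weight via $|x(\rho(\tau))-y(\rho(\tau))|\leq d(x,y)\,w(\rho(\tau))$, the bound $w(\rho(\tau))\leq 1$ inside the double sum, the collapse of the inner sum by the power rule, and the lower bound $w(t)\geq L$ in the outer normalization, producing exactly the constant $\alpha$ of (H2). Your closing remark on the asymmetric use of $L\leq w\leq 1$ accurately explains the structure of the paper's own estimate.
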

\begin{proof}
Let $(\zeta_{M},d)$ be the complete metric space  as defined in Remark \ref{comp}. As in the proof of Theorem \ref{cont1}, define
the mapping $T$ on $\zeta_{M}$ by
\begin{equation*}\label{mapping}
(Ty)(t):=M+\sum_{s=t+1}^{\infty}\frac{1}{p(s)}\sum_{\tau=a+1}^{s}\frac{(s-\rho(\tau))^{\overline{\nu-1}}}{\Gamma(\nu)}F(\tau, y(\rho(\tau))).
\end{equation*}
We already know that $T: \zeta_{M}\to\zeta_{M}$. Now, we will prove that $T$ is a contraction mapping. Let $x, y\in \zeta_{M}$ and $t\in\N_{a-1}$ be fixed but arbitrary. Then
\begin{eqnarray*}
\frac{|(Tx)(t)-(Ty)(t)|}{w(t)}&=&\frac{1}{w(t)}\left|\sum_{s=t+1}^{\infty}\frac{1}{p(s)}\sum_{\tau=a+1}^{s}\frac{(s-\rho(\tau))^{\overline{\nu-1}}}{\Gamma(\nu)}(F(\tau, x(\rho(\tau)))-F(\tau, y(\rho(\tau))))\right|\\
&\leq& \frac{1}{w(t)}\sum_{s=t+1}^{\infty}\frac{1}{p(s)}\sum_{\tau=a+1}^{s}\frac{(s-\rho(\tau))^{\overline{\nu-1}}}{\Gamma(\nu)}|F(\tau, x(\rho(\tau)))-F(\tau, y(\rho(\tau)))|\\
&\leq& \frac{K}{w(t)} \sum_{s=t+1}^{\infty}\frac{1}{p(s)}\sum_{\tau=a+1}^{s}\frac{(s-\rho(\tau))^{\overline{\nu-1}}}{\Gamma(\nu)}| x(\rho(\tau))- y(\rho(\tau))|\\
&\leq& \frac{K}{w(t)}\left( \sum_{s=t+1}^{\infty}\frac{1}{p(s)}\sum_{\tau=a+1}^{s}\frac{(s-\rho(\tau))^{\overline{\nu-1}}}{\Gamma(\nu)}w(\rho(\tau))\right)d(x,y)\\
&\leq&\frac{K}{L}\left(\sum_{s=t+1}^{\infty}\frac{(s-a)^{\overline{\nu}}}{\Gamma(\nu+1)p(s)}\right)d(x,y)\\
&\leq&\frac{K}{L}\left(\sum_{s=a+1}^{\infty}\frac{(s-a)^{\overline{\nu}}}{\Gamma(\nu+1)p(s)}\right)d(x,y)\\
&=&\alpha d(x,y).
\end{eqnarray*}
So
$$d(Tx,Ty)\leq\alpha d(x,y)$$
with $\alpha<1,$ and hence $T$ is a contraction mapping. By Contraction Mapping Theorem, $T$  has a unique fixed point $y\in\zeta_{M}$.
This fixed point satisfies the summation equation (\ref{sumeqn}), and therefore
by Lemma \ref{lemm103}, it is also a solution of the fractional difference equation (\ref{selfad}) that satisfies $\displaystyle\lim_{t\to\infty}y(t)=M.$
\end{proof}

The results for Equation (\ref{SA}) are as follows:

\begin{lemma}
Let $p:\mathbb{N}_{a}\to (0,\infty)$, $q:\mathbb{N}_{a+1}\to\R,$ and $f:\mathbb{N}_{a+1}\to\R.$ For $M\geq 0,$ define
\begin{equation*}
\xi_{M}:=\{y:\N_{a-1}\to\R:\,\displaystyle \lim_{t\to\infty}y(t)=M\,\,{\text{and}}\,\,\nabla y(a)=0\}.
\end{equation*}
The fractional difference equation (\ref{SA}) has  a solution $y\in\xi_{M}$ if and only if the summation equation
\begin{equation*}
y(t)=M+\sum_{s=t+1}^{\infty}\frac{1}{p(s)}\sum_{\tau=a+1}^{s}\frac{(s-\rho(\tau))^{\overline{\nu-1}}}{\Gamma(\nu)}(q(\tau)y(\rho(\tau))-f(\tau))
\end{equation*}
has a solution $y$ on $\N_{a-1}.$
\end{lemma}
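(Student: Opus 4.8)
The plan is to mimic the proof of Lemma~\ref{lemm103} line by line, with the nonnegative forcing term $F(t,y(\rho(t)))$ replaced throughout by $g(t):=q(t)y(\rho(t))-f(t)$. Indeed, rewriting Equation~(\ref{SA}) as $\nabla_{a-1}^{\nu}(p\nabla y)(t)=-g(t)$ for $t\in\N_{a+1}$ exhibits it as exactly the equation treated in Lemma~\ref{lemm103}, except that $g$ is now sign-indefinite. Since the set $\xi_{M}$ imposes no positivity or monotonicity---only $\lim_{t\to\infty}y(t)=M$ and $\nabla y(a)=0$---the argument is in fact slightly shorter: none of the inequality bookkeeping used to place the solution in $\zeta_{M}$ is needed here.

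For the forward direction, suppose $y\in\xi_{M}$ solves (\ref{SA}) and set $x(t):=(p\nabla y)(t)$. Then $x$ solves the fractional initial value problem $\nabla_{a-1}^{\nu}x(t)=-g(t)$ on $\N_{a+1}$ with $x(a)=(p\nabla y)(a)$. First I would apply Theorem~\ref{IVP} to write $x(t)=\frac{(t-a+1)^{\overline{\nu-1}}}{\Gamma(\nu)}x(a)-\nabla_{a}^{-\nu}g(t)$ for $t\in\N_{a}$. The condition $\nabla y(a)=0$ forces $x(a)=p(a)\nabla y(a)=0$, so the leading term drops and $\nabla y(t)=-\frac{1}{p(t)}\nabla_{a}^{-\nu}g(t)$. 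Summing this identity from $s=t+1$ to $\infty$ and invoking $\lim_{t\to\infty}y(t)=M$ telescopes the left side to $M-y(t)$ and yields precisely the asserted summation equation.

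For the converse, suppose $y$ solves the summation equation on $\N_{a-1}$. Taking the nabla difference of both sides gives $\nabla y(t)=-\frac{1}{p(t)}\sum_{\tau=a+1}^{t}\frac{(t-\rho(\tau))^{\overline{\nu-1}}}{\Gamma(\nu)}g(\tau)$ on $\N_{a}$, i.e.\ $p(t)\nabla y(t)=-\nabla_{a}^{-\nu}g(t)$. Applying $\nabla_{a-1}^{\nu}$ to both sides, the one genuine technical point is the base-point mismatch between $a-1$ and $a$: as in Lemma~\ref{lemm103} I would write $\nabla_{a}^{-\nu}g(t)=\nabla_{a-1}^{-\nu}g(t)-\frac{(t-a+1)^{\overline{\nu-1}}}{\Gamma(\nu)}g(a)$, peeling off the $\tau=a$ summand, then use $\nabla_{a-1}^{\nu}\nabla_{a-1}^{-\nu}=\mathrm{id}$ from (\ref{lbdse}) together with the power rule of Lemma~\ref{power} to show that the extra term $\nabla_{a-1}^{\nu}\{(t-(a-1))^{\overline{\nu-1}}/\Gamma(\nu)\}$ vanishes. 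This recovers $\nabla_{a-1}^{\nu}(p\nabla y)(t)=-g(t)$, which is (\ref{SA}). Finally, membership $y\in\xi_{M}$ follows directly: $\nabla y(a)=0$ because the sum $\sum_{\tau=a+1}^{a}$ is empty by convention, and $\lim_{t\to\infty}y(t)=M$ from the summation equation once the series converges.

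The step I expect to require the most care is not algebraic but analytic: both the summation from $t+1$ to $\infty$ in the forward direction and the differentiation of an infinite tail in the converse presuppose convergence of the relevant series. I would make explicit that ``has a solution $y$ on $\N_{a-1}$'' for the summation equation already guarantees the tail $\sum_{s=t+1}^{\infty}\frac{1}{p(s)}\sum_{\tau=a+1}^{s}\frac{(s-\rho(\tau))^{\overline{\nu-1}}}{\Gamma(\nu)}g(\tau)$ is finite, which legitimizes both the telescoping and the term-by-term nabla difference; everything else is the routine transcription of Lemma~\ref{lemm103}.
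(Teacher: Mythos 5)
Your proposal is correct and follows exactly the route the paper intends: the paper's own proof of this lemma consists of the single remark that it is ``similar to that of Lemma \ref{lemm103},'' and your line-by-line transcription with $F(t,y(\rho(t)))$ replaced by $q(t)y(\rho(t))-f(t)$, dropping the positivity and monotonicity bookkeeping that $\xi_{M}$ does not require, is precisely that adaptation. Your added attention to the convergence of the tail series and to the vanishing of the term $\nabla_{a-1}^{\nu}\{(t-(a-1))^{\overline{\nu-1}}/\Gamma(\nu)\}$ via Lemma \ref{power} matches the corresponding steps in the paper's proof of Lemma \ref{lemm103}.
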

\begin{proof}
The proof is similar to that of Lemma \ref{lemm103}.
\end{proof}
\begin{theorem}
Let $p:\mathbb{N}_{a}\to (0,\infty)$, $q:\mathbb{N}_{a+1}\to[0,\infty),$ and $f:\mathbb{N}_{a+1}\to\R,$ and let  $M\geq 0$ be a real number. Assume that\\
\noindent (H1)\,$\displaystyle \sum_{s=a+1}^{\infty}\frac{1}{p(s)}\sum_{\tau=a+1}^{s}\frac{(s-\rho(\tau))^{\overline{\nu-1}}}{\Gamma(\nu)}q(\tau)<\infty,$\\
\noindent (H2)\,$\displaystyle\sum_{s=a+1}^{\infty}\frac{1}{p(s)}\sum_{\tau=a+1}^{s}\frac{(s-\rho(\tau))^{\overline{\nu-1}}}{\Gamma(\nu)}|f(\tau)|<\infty$.\\
Then there exists some $b\in\N_a$ so that the fractional difference equation
\begin{equation*}
\nabla_{b-1}^{\nu}(p\nabla y)(t)+q(t)y(\rho(t))=f(t),\quad t\in\N_{b+1},
\end{equation*}
 has a solution $y:\N_{b-1}\to\R$ which satisfies  $\displaystyle \lim_{t\to\infty}y(t)=M.$ 
\end{theorem}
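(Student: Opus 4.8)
The plan is to mirror the contraction-mapping argument used for Theorem \ref{cont1}, but with one essential new twist: rather than \emph{assuming} the contraction constant is already less than $1$ (as the earlier hypotheses (H2) did), I would \emph{create} a small contraction constant by shifting the base point far enough to the right. First I would record the linear analogue of Lemma \ref{lemm103} at base $b$: for any $b\in\N_a$, a function $y:\N_{b-1}\to\R$ solves $\nabla_{b-1}^{\nu}(p\nabla y)(t)+q(t)y(\rho(t))=f(t)$ on $\N_{b+1}$ with $\nabla y(b)=0$ and $\lim_{t\to\infty}y(t)=M$ if and only if it satisfies
\[
y(t)=M+\sum_{s=t+1}^{\infty}\frac{1}{p(s)}\sum_{\tau=b+1}^{s}\frac{(s-\rho(\tau))^{\overline{\nu-1}}}{\Gamma(\nu)}\bigl(q(\tau)y(\rho(\tau))-f(\tau)\bigr),\quad t\in\N_{b-1}.
\]
This is exactly the preceding (linear) lemma with $a$ replaced by $b$, so I would simply invoke it.

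Second, and this is the key step, I would choose $b$. Set
\[
\beta_b:=\sum_{s=b+1}^{\infty}\frac{1}{p(s)}\sum_{\tau=b+1}^{s}\frac{(s-\rho(\tau))^{\overline{\nu-1}}}{\Gamma(\nu)}q(\tau).
\]
Since $q\geq 0$ and the kernel $(s-\rho(\tau))^{\overline{\nu-1}}$ is nonnegative, every term here is dominated by the corresponding term of the series in (H1); hence $\beta_b$ is bounded above by the tail $\sum_{s=b+1}^{\infty}\frac{1}{p(s)}\sum_{\tau=a+1}^{s}\frac{(s-\rho(\tau))^{\overline{\nu-1}}}{\Gamma(\nu)}q(\tau)$ of the convergent series in (H1). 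Therefore $\beta_b\to 0$ as $b\to\infty$, and I may fix $b\in\N_a$ with $\beta_b<1$.

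Third, I would run the contraction argument on $(\xi_M,\|\cdot\|)$ over $\N_{b-1}$, where $\xi_M$ consists of the $y:\N_{b-1}\to\R$ with $\lim_{t\to\infty}y(t)=M$ and $\nabla y(b)=0$; as in Remark \ref{complete}, this is a closed subset of the bounded functions under the sup norm, hence a complete metric space, and it is nonempty since $y\equiv M$ lies in it. Define $T$ by the right-hand side of the summation equation above. Conditions (H1) and (H2) guarantee that the defining series converges for each $y\in\xi_M$ (so $Ty$ is well defined) and that its tail tends to $0$, giving $\lim_{t\to\infty}(Ty)(t)=M$; the empty-sum convention gives $\nabla(Ty)(b)=0$, so $T:\xi_M\to\xi_M$. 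For the contraction estimate, because the map $u\mapsto q(\tau)u-f(\tau)$ is Lipschitz in $u$ with constant $q(\tau)$, the $f$-terms cancel in $(Tx)(t)-(Ty)(t)$ and I obtain, exactly as in Theorem \ref{cont1},
\[
|(Tx)(t)-(Ty)(t)|\leq\|x-y\|\sum_{s=b+1}^{\infty}\frac{1}{p(s)}\sum_{\tau=b+1}^{s}\frac{(s-\rho(\tau))^{\overline{\nu-1}}}{\Gamma(\nu)}q(\tau)=\beta_b\|x-y\|.
\]
Thus $\|Tx-Ty\|\leq\beta_b\|x-y\|$ with $\beta_b<1$, so $T$ is a contraction; the Contraction Mapping Theorem yields a unique fixed point $y\in\xi_M$, which by the linear lemma solves the shifted equation on $\N_{b+1}$ and satisfies $\lim_{t\to\infty}y(t)=M$.

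The only genuinely new point, and the main thing to get right, is the second step: verifying that $\beta_b\to 0$, i.e.\ that the double tail controlled by (H1) shrinks below $1$. Everything else is a transcription of the proof of Theorem \ref{cont1}, with the constant Lipschitz coefficient $K$ replaced by the variable coefficient $q(\tau)$ and with the base shifted from $a$ to $b$.
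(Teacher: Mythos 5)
Your proposal is correct and follows essentially the same route the paper takes: the paper's proof simply defers to Theorem 3.2.2 of \cite{abi}, modified to use the space $\widetilde{\xi}_{b-1}$ with the sup norm, and your argument is exactly that — shift the base point to $b$ so that the tail of the convergent series in (H1) drops below $1$, then run the contraction mapping argument on $\widetilde{\xi}_{b-1}$ via the linear analogue of Lemma \ref{lemm103}. The one step the paper leaves entirely implicit, namely that $\beta_b\to 0$ because the inner sum over $\tau=b+1,\dots,s$ is termwise dominated by the inner sum over $\tau=a+1,\dots,s$ in the convergent (H1) series, is the step you identify and justify correctly.
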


\begin{proof}
The proof is similar to that of Theorem 3.2.2. in \cite{abi} except that  we define $\widetilde{\xi}_{b-1}:=\{y:\N_{b-1}\to\R:\,\displaystyle \lim_{t\to\infty}y(t)=M\,\,{\text{and}}\,\,\nabla y(b)=0\}$, where $b\in\mathbb{N}_a,$ and the supremum norm $||.||$ on $\widetilde{\xi}_{b-1}$ by $\displaystyle ||y||:=\sup_{t\in\mathbb{N}_{b-1}}|y(t)|.$
\end{proof}

\section{Acknowledgements}
This study was supported by The Scientific and Technological Research Council of Turkey while the first author visiting the University of Nebraska-Lincoln. The third author would like to thank Prince Sultan University for funding this work through research group
Nonlinear Analysis Methods in Applied Mathematics (NAMAM) group number RG-DES-2017-01-17.

\end{document}